\documentclass[11pt,a4paper]{amsart}
%
%
\usepackage{amsfonts,amssymb,amsbsy,amsmath,amsthm}
%
%
\usepackage[dvips,colorlinks=true,linkcolor=blue,urlcolor=red]{hyperref}
%
%
\newcommand{\bel}{\begin{equation} \label}

\newcommand{\ee}{\end{equation}}

\newcommand{\rd}{{\mathbb R}^{2}}
\newcommand{\re}{{\mathbb R}}

\newcommand{\tr}{{\rm Tr}\,}

\newcommand{\esssup}{\operatornamewithlimits{\text{ess-sup}}}
\newcommand{\car}{\mathbf{1}}
\newcommand{\R}{\mathbb{R}}

\newcommand{\N}{\mathbb{N}}

\newcommand{\Z}{\mathbb{Z}}

\newcommand{\vers}{\operatornamewithlimits{\to}}

\newcommand{\D}{\displaystyle}

\newcommand{\pro}{\mathbb{P}}

\theoremstyle{plain}
\newtheorem{Th}{Theorem}
\newtheorem{Le}{Lemma}

\theoremstyle{definition}

\title[Lifshitz tails in a constant magnetic field]{Lifshitz tails for
  alloy type models in a constant magnetic field} 
\author{Fr{\'e}d{\'e}ric Klopp} \address[Fr{\'e}d{\'e}ric Klopp]{LAGA, U.M.R. 7539
  C.N.R.S, Institut Galil{\'e}e, Universit{\'e} Paris-Nord, 99 Avenue
  J.-B. Cl{\'e}ment, F-93430 Villetaneuse, France\ et \ Institut
  Universitaire de France}
\email{\href{mailto:klopp@math.univ-paris13.fr}{klopp@math.univ-paris13.fr}}
\keywords{Lifshitz tails, Landau Hamiltonian, continuous Anderson
  model}
\subjclass[2000]{82B44, 47B80, 47N55, 81Q10}
\dedicatory{Dedicated to the memory of Pierre Duclos.}
\thanks{Part of this work was done during the conference ``Spectral
  analysis of differential operators'' held at the CIRM, Luminy
  (07/07-11/07/2008); it is a pleasure to thank P. Briet, F. Germinet
  and G. Raikov, the organizers of this event for their invitation to
  participate to the meeting.\\
  This work was partially supported by the grant ANR-08-BLAN-0261-01}
\begin{document}
%
\begin{abstract}
  In this paper, we study Lifshitz tails for a 2D Landau Hamiltonian
  perturbed by a random alloy-type potential constructed with single
  site potentials decaying at least at a Gaussian speed. We prove
  that, if the Landau level stays preserved as a band edge for the
  perturbed Hamiltonian, at the Landau levels, the integrated density
  of states has a Lifshitz behavior of the type $e^{-\log^2|E-2bq|}$.
  \vskip.5cm\noindent \textsc{R{\'e}sum{\'e}.} Dans cette note, nous
  d{\'e}montrons qu'en dimension 2, la densit{\'e} d'{\'e}tats int{\'e}gr{\'e}e d'un
  op{\'e}rateur de Landau avec un potentiel al{\'e}atoire non n{\'e}gatif de type
  Anderson dont le potentiel de simple site d{\'e}cro{\^\i}t au moins aussi
  vite qu'une fonction gaussienne admet en chaque niveau de Landau,
  disons, $2bq$, si celui-ci est un bord du spectre, une asymptotique
  de Lifshitz du type $e^{-\log^2|E-2bq|}$.
\end{abstract}
\setcounter{section}{-1}
\maketitle
\section{Introduction}
\label{sec:introduction}
On $C_0^{\infty}(\rd)$, consider the Landau Hamiltonian
\begin{equation*}
  H_0 = H_0(b): =  (-i\nabla -A)^2 - b
\end{equation*}
where $A = (-\frac{bx_2}{2},\frac{bx_1}{2})$ is the magnetic
potential, and $b>0$ is the constant scalar magnetic field. $H_0$ is
essentially self-adjoint on $C_0^{\infty}(\rd)$.  It is well-known
that $\sigma(H_0)$, the spectrum of the operator $H_0$, consists of
the so-called Landau levels $\{2bq;\ q\in\N=\{0,1,2,\cdots\}\}$; each
Landau level is an eigenvalue of infinite multiplicity of $H_0$.\\
Consider now the random ${\mathbb Z}^2$-ergodic alloy-type electric
potential
\begin{equation*}
  V_{\omega}(x): = \sum_{\gamma \in {\mathbb Z}^2}
  \omega_{\gamma} u(x - \gamma), \quad x \in \re^2
\end{equation*}
where we assume that
\begin{itemize}
\item ${\bf H}_1$: The single-site potential $u$ satisfies, for some
  $C>0$ and $x_0\in\rd$,

  \begin{equation*}
    \frac1{C}{\bf 1}_{\{x \in \rd\,;\, |x-x_0| < 1/C\}} \leq u(x) \leq
    Ce^{- |x|^2/C}.
  \end{equation*}
\item ${\bf H}_2$: The coupling constants
  $\left\{\omega_{\gamma}\right\}_{{\gamma} \in {\mathbb Z}^2}$ are
  non-trivial, almost surely bounded i.i.d. random variables.
\end{itemize}
These two assumptions guarantee $V_{\omega}$ is almost surely
bounded. On the domain of $H_0$, define the operator
$H_{\omega}:=H_0+V_{\omega}$. The integrated density of states (IDS)
of the operator $H_\omega$ is defined as the non-decreasing
left-continuous function ${\mathcal N} : \re \to [0,\infty)$ which,
almost surely, satisfies
\begin{equation*}
  \int_{\re} \varphi(E) d{\mathcal N}(E) = \lim_{R \to \infty} R^{-2}
  \,\tr\left({\bf 1}_{{\Lambda}_R} \varphi(H) {\bf 1}_{{\Lambda}_R} \right),
  \quad \forall \varphi \in C_0^{\infty}(\re).
\end{equation*}
Here and in the sequel, $\Lambda_R : = \left(-\frac{R}{2},
  \frac{R}{2}\right)^2$ and ${\bf 1}_{{\mathcal O}}$ denotes the
characteristic function of the set ${\mathcal O}$.\\
By the Pastur-Shubin formula (see e.g. \cite[Section 2]{MR2378428}),
we have
\begin{equation*}
  \int_{\re} \varphi(E) d{\mathcal N}(E) =
  {\mathbb E}\left(\tr \left({\bf 1}_{{\Lambda}_1} \varphi(H) {\bf
        1}_{{\Lambda}_1} \right) \right), \quad \forall \varphi \in
  C_0^{\infty}(\re), 
\end{equation*}
where ${\mathbb E}$ denotes the mathematical expectation with respect
to the random variables $(\omega_\gamma)_\gamma$. Moreover, there
exists a set $\Sigma\subset \re$ such that $\sigma(H_{\omega}) =
\Sigma$ almost surely. $\Sigma$ is the support of the positive measure
$d{\mathcal N}$. The aim of the present article is to study the
asymptotic behavior of ${\mathcal N}$ near the edges of $\Sigma$. It
is well known that, for many random models, this behavior is
characterized by a very fast decay which goes under the name of
``Lifshitz tails''. It was studied extensively (see
e.g.~\cite{MR94h:47068,MR2378428,MR2307751} and references therein).
\vskip.2cm\noindent In order to fix the picture of the almost sure
spectrum $\sigma(H_{\omega})$, we assume:
\begin{itemize}
\item ${\bf H}_3$: the common support of the random variables
  $(\omega_{\gamma})_{\gamma\in\Z^2}$ consists of the interval
  $[\omega_-,\omega_+]$ where $\omega_- <\omega_+$ and $\omega_-
  \omega_+=0$.
\item ${\bf H}_4$: $M_+ - M_- < 2b$ where
  \begin{equation*}
    \pm M_{\pm} : = \esssup_{\omega}\;\sup_{x \in \re^2} \; (\pm
    V_{\omega}(x)).
  \end{equation*}
\end{itemize}
Assumptions ${\bf H}_1$ -- ${\bf H}_4$ imply that $M_- M_+=0$. It also
implies that the union $\D\bigcup_{q=0}^{\infty} [2bq+M_-, 2bq+M_+]$,
which contains $\Sigma$, is disjoint. Let $W$ be the bounded
$\Z^2$-periodic potential defined by
\begin{equation*}
  W(x) : = \sum_{\gamma \in \Z^2} u(x-\gamma), \quad x \in\re^2.
\end{equation*}
On the domain of $H_0$, define the operators $H^{\pm} : = H_0 +
\omega_{\pm} W$. It is easy to see that
\begin{equation*}
  \sigma(H^-) \subseteq
  \cup_{q=0}^{\infty} [2bq+M_-, 2bq], \quad \sigma(H^+) \subseteq
  \cup_{q=0}^{\infty} [2bq, 2bq+M_+],  
\end{equation*}
and
\begin{equation*}
  \sigma(H^-) \cap
  [2bq+M_-, 2bq] \neq \emptyset, \quad \sigma(H^+) \cap [2bq, 2bq+M_+]
  \neq \emptyset, \quad \forall q\in\N.  
\end{equation*}
Set
\begin{equation*}
  E_q^- := \min(\partial\sigma(H^-) \cap [2bq+M_-, 2bq]),
  \quad
  E_q^+ := \max(\partial\sigma(H^+) \cap [2bq, 2bq+M_+]).
\end{equation*}
The standard characterization of the almost sure spectrum (see also
\cite[Theorem 5.35]{MR94h:47068}) yields
\begin{equation*}
  \Sigma =\bigcup_{q=0}^{\infty} [E_q^-,E_q^+],\quad E_q^-<E_q^+
\end{equation*}
i.e. $\Sigma$ is represented as a disjoint union of compact intervals,
and each interval $[E_q^-,E_q^+]$ contains exactly one Landau level
$2bq$. Actually, one has either $E_q^-=2bq$ or $E_q^+=2bq$; more
precisely $E_q^-=2bq$ if $\omega_-=0$ and $E_q^+=2bq$ if $\omega_+=0$.\\
In Theorem 2.1 of~\cite{MR2249786}, the authors describe the behavior
of ${\mathcal N}(2bq + E) - {\mathcal N}(2bq)$ when $E$ tends to $0$
while in $\Sigma$. Under the assumption that $u$ does not decay as
fast as in assumption ${\bf H}_1$, they compute the logarithmic
asymptotics of the IDS near $2bq$. Under assumption ${\bf H}_1$, the
authors obtained the optimal logarithmic upper bound and a lower bound
that they deemed not to be optimal. In our main result, we obtain the
optimal lower bound, thus, proving the logarithmic asymptotics.
\begin{Th}
  \label{thr:1}
  Let $b>0$ and assumptions ${\bf H}_1$ -- ${\bf H}_4$ hold. Assume
  that, for some $C > 0$ and $\kappa > 0$,
  \begin{equation}
    \label{fin70} 
    {\mathbb P}(|\omega_0|\leq E) \sim CE^{\kappa}, \quad E \downarrow
    0.
  \end{equation}
  Then, for any $q\in\N$, one has
  \begin{equation}
    \label{hin2} 
    \lim_{\substack{E\to0\\E\in\Sigma}}
    \frac{\ln{|\ln{({\mathcal N}(2bq + E) - {\mathcal
            N}_b(2bq})|}}{\ln{|\ln{E}|}}=2.
  \end{equation}
\end{Th}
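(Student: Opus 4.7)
The matching upper bound of \eqref{hin2} is already contained in Theorem~2.1 of~\cite{MR2249786}, so all that remains is to establish the lower bound
\[
\limsup_{\substack{E\to0\\E\in\Sigma}}\frac{\ln|\ln({\mathcal N}(2bq+E)-{\mathcal N}(2bq))|}{\ln|\ln E|}\leq 2.
\]
By ${\bf H}_3$ one has either $\omega_-=0$ or $\omega_+=0$; I treat the first case (so that $V_\omega\geq 0$ and $E_q^-=2bq$), the second being identical after an obvious sign change. The task is thus to exhibit, on an event of probability at least $\exp(-C|\ln E|^2)$, many eigenvalues of $H_\omega$ lying just above $2bq$.

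I would start with Dirichlet--Neumann bracketing on magnetic boxes, in the form used in~\cite{MR2378428}, which yields, for any $L>0$,
\[
{\mathcal N}(2bq+E)-{\mathcal N}(2bq)\gtrsim L^{-2}\,{\mathbb E}\bigl[N(H_\omega^{\Lambda_L};2bq,2bq+E)\bigr],
\]
up to boundary and gap corrections absorbed by ${\bf H}_4$. The classical strategy then reduces to designing a favorable event $\mathcal E_E$ on which $H_\omega^{\Lambda_L}$ has $\gtrsim bL^2$ eigenvalues in $[2bq,2bq+C''E]$, and whose probability is computed via~\eqref{fin70}. The relevant scales are dictated by the Gaussian tail in ${\bf H}_1$: fixing $0<\varepsilon\ll 1\ll C_\ast$, I set
\[
L:=\lfloor\varepsilon\sqrt{|\ln E|}\rfloor,\quad L':=L+C_\ast\sqrt{|\ln E|},\quad \mathcal E_E:=\{\omega_\gamma\leq E\text{ for all }\gamma\in\Lambda_{L'}\cap\Z^2\}.
\]
On $\mathcal E_E$, splitting $V_\omega$ into contributions of lattice sites inside and outside $\Lambda_{L'}$, the bound $u(x)\leq Ce^{-|x|^2/C}$ together with $\omega_\gamma\leq\omega_+$ shows that the outside contribution is bounded by $\omega_+\sum_{|\gamma|\geq C_\ast\sqrt{|\ln E|}}e^{-|\gamma|^2/C}\leq E$ for $C_\ast$ large enough, hence $\sup_{\Lambda_L}V_\omega\leq CE$. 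Using \eqref{fin70} and the independence in ${\bf H}_2$,
\[
{\mathbb P}(\mathcal E_E)\geq (cE^\kappa)^{|\Lambda_{L'}\cap\Z^2|}\geq \exp(-C'|\ln E|^2).
\]

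The main technical obstacle is the eigenvalue count on $\mathcal E_E$. The natural trial states are the magnetic (anti-holomorphic) coherent states associated to the $q$-th Landau level, centred at the lattice points of $\Lambda_L$; they form a family of dimension $\asymp bL^2/(2\pi)$, essentially supported in $\Lambda_L$ modulo a Gaussian tail of variance $\sim 1/b$. Because the Landau projector onto the $2bq$-eigenspace is non-local, these states must be multiplied by a smooth cut-off supported in $\Lambda_L$ in order to lie in the form domain of $H_\omega^{\Lambda_L}$, and one must check that (i) the cut-off combined with the near-diagonal action of $V_\omega\leq CE$ on $\Lambda_L$ costs only $O(E)$ in quadratic-form energy above $2bq$---this uses crucially $L\ll\sqrt{|\ln E|}$, so that the Gaussian tails of the coherent states are swallowed by $E$---and (ii) the cut-off states remain an approximately orthonormal family, so that the min--max principle supplies the sought-after $\gtrsim bL^2$ eigenvalues of $H_\omega^{\Lambda_L}$ in $[2bq,2bq+C''E]$. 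This is the analytic heart of the proof.

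Combining the two estimates and using ergodicity via an independent covering of $\Lambda_R$ by disjoint translates of $\Lambda_{L'}$ yields
\[
{\mathcal N}(2bq+C''E)-{\mathcal N}(2bq)\geq c\,\frac{L^2}{(L')^2}\,{\mathbb P}(\mathcal E_E)\geq \exp(-C'''|\ln E|^2).
\]
Applying $\ln|\ln\cdot|$ and dividing by $\ln|\ln E|$ gives the $\limsup\leq 2$, the rescaling $E\mapsto C''E$ being absorbed in the double-log limit, and \eqref{hin2} follows.
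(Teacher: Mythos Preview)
You have the two halves of the theorem reversed. In the paper's conventions, the ``upper bound'' already available from \cite{MR2249786} is precisely the inequality $\limsup \frac{\ln|\ln(\mathcal N(2bq+E)-\mathcal N(2bq))|}{\ln|\ln E|}\leq 2$, i.e.\ a \emph{lower} bound on the IDS near the Landau level. What the present paper has to supply is the opposite inequality
\[
\liminf_{E\downarrow 0}\frac{\ln|\ln(\mathcal N(2bq+E)-\mathcal N(2bq))|}{\ln|\ln E|}\geq 2,
\]
that is, an \emph{upper} bound on $\mathcal N(2bq+E)-\mathcal N(2bq)$ of order $\exp(-|\ln E|^{2-o(1)})$. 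Your entire plan---constructing a favourable event on which many trial states have energy in $[2bq,2bq+CE]$ and thereby lower-bounding the IDS---addresses the direction that is already known, not the one that is open. So the proposal does not touch the actual content of Theorem~\ref{thr:1}.

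For orientation, the paper's argument for the missing direction goes the other way: one must show that eigenvalues close to $2bq$ are \emph{rare}. Via the periodic approximation (Theorem~\ref{f31}) and the bound \eqref{eq:10}, this reduces to estimating $\mathbb P\bigl(\sigma(\Pi_q V_{L,\omega}^{\mathrm{per}}\Pi_q)\cap(-\infty,CE]\neq\emptyset\bigr)$. The key analytic input is an ``enlargement of obstacles'' inequality (Lemma~\ref{le2}): for $R$ large,
\[
\Pi_q\mathbf 1_{D(0,\varepsilon)}\Pi_q\ \geq\ e^{-C_0R^2\log R}\bigl(\Pi_q\mathbf 1_{D(0,R)}\Pi_q-e^{-R^2/C_0}\Pi_q\mathbf 1_{D(0,2R)}\Pi_q\bigr),
\]
proved by diagonalising all three operators in the common Laguerre basis $(\varphi_{q,k})_k$ and controlling the eigenvalue ratios uniformly in $k$ (Lemma~\ref{f1}). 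Taking $R\asymp|\ln E|^{(1-\eta)/2}$ and summing over sites yields the operator lower bound of Theorem~\ref{thr:2}, from which a large-deviation estimate on $\sum_{|\beta|\leq R}\omega_\beta$ gives $\mathcal N(2bq+E)-\mathcal N(2bq)\leq C_\eta e^{-|\ln E|^{2-2\eta}/C_\eta}$. This is the step your proposal is missing.
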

\noindent Thus, Theorem~\ref{thr:1} states that, at the Landau level
$2bq$, when it is a spectral edge for $H_0$, the IDS decays roughly as
$e^{-\log^2|E-2bq|}$. This decay is faster than any power of
$|E-2bq|$. This explains why we name this behavior also Lifshitz tails
even though it is much slower than the Lifshitz tails obtained when
the magnetic field is absent (see e.g.~\cite{MR94h:47068}).\\
In~\cite{MR2249786}, the upper bound in~\eqref{hin2} is proved under
less restrictive assumptions; indeed, Theorem 5.1 of~\cite{MR2249786}
states in particular that, under our assumptions,
\begin{equation*}
  \limsup_{E \downarrow 0}
  \frac{\ln{|\ln{({\mathcal N}(2bq + E) - {\mathcal
          N}_b(2bq})|}}{\ln{|\ln{E}|}}\leq 2.
\end{equation*}
So it suffices to prove
\begin{equation}
  \label{eq:18}
  \liminf_{E \downarrow 0}
  \frac{\ln{|\ln{({\mathcal N}(2bq + E) - {\mathcal
          N}_b(2bq})|}}{\ln{|\ln{E}|}}\geq 2.
\end{equation}
The improvement over the results in~\cite{MR2249786} is obtained
through a different ana\-lysis that borrows ideas and estimates
from~\cite{MR2097581}. The basic idea is to show that, for energies at
a distance at most $E$ from $2bq$, the single site potential $u$ can
be replaced by an effective potential that has a support of size
approximately $|\log E|^{1/2}$ (see section~\ref{sec:proof-theorem}
and Lemma~\ref{le2} therein). This can then be used to estimate the
probability of the occurrence of such energies.
\section{Periodic approximation}
\label{sec:peri-appr}
\noindent Assume that hypotheses ${\bf H}_1-{\bf H}_4$ hold. For the
sake of definiteness, from now on, we assume that $\omega_-=0$. So,
for $q\in\N$, we have $E_q^-=2bq$. Moreover,~\eqref{fin70} becomes
${\mathbb P}(0\leq\omega_0\leq E)\sim CE^{\kappa}$ for $E>0$ small and
${\mathbb P}(0\geq\omega_0\geq -E)=0$ for any $E>0$. Up to obvious
modifications, the case $\omega_+=0$ is dealt with in the same way.\\
We now recall some useful results from~\cite{MR2249786}. Pick $a>0$
such that $\frac{ba^2}{2\pi} \in {\mathbb N}$. Set $L : = (2n+1)a/2$,
$n\in{\mathbb N}$, and define the random $2L{\mathbb Z}^2$-periodic
potential
\begin{equation}
  \label{eq:20}
  V_{L,\omega}^{\rm per}(x) =
  V_{L,\omega}^{\rm per}(x): = \sum_{\gamma \in 2L{\mathbb
      Z}^2}\left(V_{\omega} {\bf
      1}_{\Lambda_{2L}}\right)(x+\gamma), \quad x \in \re^2.  
\end{equation}
For $q\in\N$, let $\Pi_q$ be the orthogonal projection onto the
$(q+1)$-st Landau level i.e. the orthogonal projection onto ${\rm
  Ker}(H_0 - 2bq)$. Consider the bounded operator
$\Pi_qV_{L,\omega}^{\text{per}}\Pi_q$. It is invariant by the Abelian
group of magnetic translations generated by $2L{\mathbb Z}^2$ (see
section 2 in~\cite{MR2249786}). Hence,
$\Pi_qV_{L,\omega}^{\text{per}}\Pi_q$ admits an integrated density of
states that we denote by $\rho_{q,L,\omega}(E)$
(see~\cite{MR2249786}). In~\cite{MR2249786}, we have proved
\begin{Th}[\cite{MR2249786}]
  \label{f31}
  Assume that hypotheses ${\bf H}_1-{\bf H}_4$ hold and $\omega_-=0$.
  Pick $q\in\N$ and $\eta>0$. Then, there exist $\nu> 0$, $C>1$ and
  $E_0>0$, such that for each $E \in(0,E_0)$ and $L \geq E^{-\nu}$, we
  have
  \begin{multline*}
    {\mathbb E}\left(\rho_{q,L,\omega}(E/C)\right) - e^{-E^{-\eta}}
    \leq {\mathcal N}(2bq + E) - {\mathcal N}(2bq)\\ \leq {\mathbb
      E}\left(\rho_{q,L,\omega}(C E)\right) + e^{-E^{-\eta}}.
  \end{multline*}
\end{Th}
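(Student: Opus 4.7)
The proof has two stages: first, replace the random ergodic operator $H_\omega$ by the $2L\Z^2$-periodic operator $H_0+V^{\rm per}_{L,\omega}$, incurring only an error of size $e^{-E^{-\eta}}$ provided $L\geq E^{-\nu}$ with $\nu$ taken large enough; second, reduce the periodic operator near $2bq$ to its Landau-projected compression $\Pi_q V^{\rm per}_{L,\omega}\Pi_q$, using the spectral gap ensured by ${\bf H}_4$.

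For the first stage, I would apply a magnetic Dirichlet--Neumann bracketing adapted to the Landau Hamiltonian to sandwich $\mathcal{N}(2bq+E) - \mathcal{N}(2bq)$ between the normalized expected eigenvalue counting functions of the magnetic Dirichlet and Neumann realizations of $H_\omega$ on $\Lambda_{2L}$. Inside $\Lambda_{2L}$ the potentials $V_\omega$ and $V^{\rm per}_{L,\omega}$ differ only by contributions of single-site potentials centered outside $\Lambda_{2L}$; by ${\bf H}_1$ this difference is uniformly bounded by $Ce^{-L^2/C}$. Hence the magnetic boundary value problems for $H_\omega$ and $H_0+V^{\rm per}_{L,\omega}$ on $\Lambda_{2L}$ have $L^\infty$-close potentials, a min-max comparison produces an error in the eigenvalue count, and normalization by $L^{-2}$ together with the choice $L\geq E^{-\nu}$ for $\nu$ satisfying $2\nu>\eta$ forces this error below $e^{-E^{-\eta}}$.

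For the second stage, the $2L\Z^2$-magnetic periodicity of $V^{\rm per}_{L,\omega}$ and ${\bf H}_4$ imply that the spectrum of $H_0+V^{\rm per}_{L,\omega}$ in a neighborhood of $2bq$ sits in $[2bq+M_-,2bq+M_+]$ and is separated from the other Landau bands by a gap $g := 2b-(M_+-M_-)>0$. Writing $P=\Pi_q$, $Q=I-P$, an eigenvalue $E'\in[0,E]$ of $H_0+V^{\rm per}_{L,\omega}-2bq$ corresponds via the Schur--Feshbach complement to an eigenvalue on $\mathrm{Ran}(P)$ of
\[
F(E') := PV^{\rm per}_{L,\omega} P - PV^{\rm per}_{L,\omega} Q\bigl(Q(H_0+V^{\rm per}_{L,\omega})Q-2bq-E'\bigr)^{-1}Q V^{\rm per}_{L,\omega} P .
\]
The second term is a non-negative bounded operator, monotone and analytic in $E'$, with norm $\leq M_+^2/g$. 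Combining this monotonicity of $F(E')$ in $E'$ with positivity of $V^{\rm per}_{L,\omega}$ (recall $\omega_-=0$) sandwiches the $F(E')$-eigenvalue counting on $[0,E]$ between the $PV^{\rm per}_{L,\omega} P$-eigenvalue counting on $[0,E/C]$ and on $[0,CE]$ for some $C>1$ depending only on $b$ and $M_\pm$. Taking expectation and normalizing by $|\Lambda_{2L}|$ identifies the resulting bound with $\mathbb{E}(\rho_{q,L,\omega})$.

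The main obstacle is obtaining the sharp sandwich in the Schur--Feshbach step: the correction $PV^{\rm per}_{L,\omega} Q(QHQ-2bq)^{-1} QV^{\rm per}_{L,\omega} P$ is of order $O(1)$ in operator norm rather than $O(E')$, so converting the Feshbach eigenvalue matching into a \emph{multiplicative} sandwich between the counting functions of $F(E')$ and of $PV^{\rm per}_{L,\omega} P$ requires exploiting monotonicity and the non-negativity of both $V^{\rm per}_{L,\omega}$ and the correction operator. A secondary technical subtlety is matching the magnetic gauge across the two stages; the condition $ba^2/(2\pi)\in\N$ together with $L=(2n+1)a/2$ ensures simultaneous compatibility of the bracketing of Stage~1 with the magnetic $2L\Z^2$-translation invariance used in Stage~2.
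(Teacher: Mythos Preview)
The paper does not prove Theorem~\ref{f31}: it is quoted verbatim as a result of Klopp--Raikov~\cite{MR2249786} and used as a black box. There is therefore no in-paper proof to compare against; all the present paper contributes is the reduction from Theorem~\ref{f31} to Theorem~\ref{thr:2} via~\eqref{eq:10}--\eqref{eq:13}, and the proof of Theorem~\ref{thr:2} in Section~\ref{sec:proof-theorem}.

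Your two-stage outline---periodic approximation by magnetic Dirichlet/Neumann bracketing, then Feshbach--Schur reduction to $\Pi_q V^{\rm per}_{L,\omega}\Pi_q$ using the gap from ${\bf H}_4$---is the standard strategy and is indeed what is carried out in~\cite{MR2249786}. Two points deserve caution. First, in Stage~1 the passage from boundary-value realizations on $\Lambda_{2L}$ to the IDS of the \emph{periodic} operator is not just a matter of $L^\infty$-closeness of potentials: one must also control the effect of the Dirichlet/Neumann boundary on the eigenvalue count near a Landau level, which in two dimensions with a magnetic field requires a separate argument (the boundary contribution scales like the perimeter, not like $e^{-L^2/C}$). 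Second, your own diagnosis of the ``main obstacle'' is accurate: since the Feshbach correction is $O(1)$ rather than $O(E')$, the multiplicative sandwich $E/C\leq\cdot\leq CE$ does not follow from monotonicity alone. In~\cite{MR2249786} this is handled by a Birman--Schwinger type argument exploiting $V^{\rm per}_{L,\omega}\geq0$ to compare counting functions of $PV^{\rm per}P$ and of the full operator; your sketch gestures at the right ingredients but does not yet close the gap.
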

\noindent As $\rho_{q,L,\omega}(E)$ is the IDS of the periodic
operator $\Pi_q V_{L,\omega}^{\text{per}}\Pi_q$ at energy $E$, it
vanishes if and only if $\sigma(\Pi_q V_{L,\omega}^{\rm per}
\Pi_q)\cap(-\infty,E]\not=\emptyset$. Moreover, $\rho_{q,L,\omega}(E)$
is bounded by $CL^d$ where the constant $C$ is locally uniform in $E$
(see~\cite{MR2249786}). Thus, we get that, for some $C>0$,
\begin{equation}
  \label{eq:10}
  {\mathbb E}\left(\rho_{q,L,\omega}(E)\right)\leq C
  \,L^d\,\pro\left(\sigma(\Pi_q V_{L,\omega}^{\rm per}  
    \Pi_q)\cap(-\infty,CE]\not=\emptyset\right).
\end{equation}
Then, the estimate~\eqref{eq:18} and, thus, Theorem~\ref{thr:1}, is a
consequence of
\begin{Th}
  \label{thr:2}
  For $\eta\in(0,1)$, there exists $C_\eta>0$ such that, for $E$
  sufficiently small and $L\geq1$, one has, for almost all $\omega$,
  \begin{multline}
    \label{eq:19}
    e^{|\log E|^{1-\eta}\log|\log E\|}\,\Pi_q V_{L,\omega}^{\rm per}
    \Pi_q\\\geq
    \left[\inf_{\gamma\in\Lambda_{2L}\cap\Z^2}\left(\sum_{|\beta-\gamma|\leq
          |\log E|^{(1-\eta)/2}}\omega_\beta \right) - e^{-|\log
        E|^{1-\eta}/C_\eta}\right]\Pi_q.
  \end{multline}
\end{Th}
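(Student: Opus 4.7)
The statement~\eqref{eq:19} is a uniform operator inequality on $\Pi_q L^2$ saying that, up to a prefactor $e^{-|\log E|^{1-\eta}\log|\log E|}$, the compressed random potential $\Pi_q V^{\rm per}_{L,\omega}\Pi_q$ dominates $(\inf \text{ local average})\,\Pi_q$, where the averaging scale is $R=|\log E|^{(1-\eta)/2}$. The underlying picture is that, on the $q$-th Landau level, a Gaussian single-site potential acts as an effective operator of spread $\sim 1/\sqrt b$ with Gaussian off-diagonal decay, a fact encoded in the integral kernel of $\Pi_q$ (a Laguerre polynomial in $b|x-y|^2$ times $e^{-b|x-y|^2/4}$ and a magnetic phase). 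The choice $R^2=|\log E|^{1-\eta}$ is designed precisely so that the Gaussian localization cost $e^{-R^2/C}$ matches the prefactor permitted on the left of~\eqref{eq:19}.

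My plan is to work with the coherent states $\{\phi_z\}_{z\in\re^2}\subset\Pi_q L^2$ generated from a reference state by magnetic translations. They satisfy the resolution of identity $\Pi_q=\frac{b}{2\pi}\int|\phi_z\rangle\langle\phi_z|\,dz$ and each $\phi_z$ is localized around $z$ on scale $1/\sqrt b$ with Gaussian tails. Using the pointwise lower bound $u(x)\ge C^{-1}{\bf 1}_{|x-x_0|<1/C}$ in ${\bf H}_1$, a direct computation yields $\langle\phi_z,u(\cdot-\beta)\phi_z\rangle\ge c\,e^{-|\beta-z|^2/C'}$, with $c,C'$ depending only on $b$, $x_0$ and $C$. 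The hypothesis $\omega_-=0$ forces $\omega_\beta\ge0$, so every term $\omega_\beta\Pi_q u(\cdot-\beta)\Pi_q$ is a non-negative operator and the terms with $|\beta-z|>R$ may be dropped for a lower bound; the close terms give the diagonal estimate
\begin{equation*}
\langle\phi_z,V^{\rm per}_{L,\omega}\phi_z\rangle\ \ge\ c\,e^{-R^2/C'}\sum_{|\beta-z|\le R}\omega_\beta,\qquad z\in\re^2.
\end{equation*}
With $R^2=|\log E|^{1-\eta}$ this is exactly of the form claimed in~\eqref{eq:19} modulo the $\log|\log E|$ slack, which absorbs both $C'$-dependent losses and the polynomial-in-$R$ prefactor coming from the Laguerre polynomial when $q\ge1$.

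The main obstacle, which I expect to occupy the technical heart of the proof, is to upgrade the pointwise coherent-state matrix-element estimate above into the uniform operator inequality~\eqref{eq:19}. Naive partition-of-unity arguments fail because $\Pi_q$ does not commute with multiplication by cutoff functions. The approach I would take, in the spirit of~\cite{MR2097581}, is to exploit the anti-Wick/Berezin identification of $\Pi_q u(\cdot-\beta)\Pi_q$ with the Toeplitz operator whose covariant symbol is a Gaussian convolution of $u(\cdot-\beta)$; this reduces~\eqref{eq:19} to a pointwise inequality on Berezin symbols, where the close-range lower bound above, the positivity of the far range, and the truncation tail $\sup_\beta|\omega_\beta|\,\sum_{|\beta-z|>R}e^{-|\beta-z|^2/C'}\lesssim e^{-|\log E|^{1-\eta}/C_\eta}$ combine to produce the error term on the right of~\eqref{eq:19}. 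Finally, restricting the continuous infimum over $z$ to the period lattice $\Lambda_{2L}\cap\Z^2$ is legitimate because $V^{\rm per}_{L,\omega}$ is $2L\Z^2$-periodic and the sum $\sum_{|\beta-z|\le R}\omega_\beta$ is piecewise constant in $z$ on unit cells (with a negligible contribution from cells meeting the boundary of the truncation ball, absorbed in the same error term).
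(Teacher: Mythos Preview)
Your diagonal coherent-state estimate
\[
\langle\phi_z,V^{\rm per}_{L,\omega}\phi_z\rangle\ \ge\ c\,e^{-R^2/C'}\sum_{|\beta-z|\le R}\omega_\beta
\]
is correct, but the step in which you claim that the anti-Wick/Berezin identification ``reduces~\eqref{eq:19} to a pointwise inequality on Berezin symbols'' is where the argument breaks. The Berezin (covariant, lower) symbol of $\Pi_q V\Pi_q$ is indeed the Gaussian convolution of $V$, and that convolution is uniformly bounded below by the local averages you write. However, a uniform lower bound on the \emph{covariant} symbol does \emph{not} yield an operator lower bound; it is the \emph{contravariant} (upper) symbol that controls the operator from below via the resolution of identity. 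For $\Pi_q V\Pi_q$ the upper symbol is $V$ itself, and $V$ is \emph{not} pointwise bounded below by any positive constant (the single-site bumps leave gaps). Passing from the lower symbol back to an upper symbol is a backward heat flow and is exactly the ill-posed direction. So the ``main obstacle'' you flag is not merely technical: the Berezin-symbol route, as you describe it, cannot close.

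The paper avoids this obstruction by a different mechanism. Since the single-site potential dominates a small disk indicator $\car_{D(0,\varepsilon)}$, one is comparing Toeplitz operators with \emph{radially symmetric} symbols, and by Lemma~\ref{le1} all such operators are simultaneously diagonal in the angular-momentum basis $(\varphi_{q,k})_{k\ge0}$. The operator inequality one actually proves (Lemma~\ref{le2}) is
\[
\Pi_q\car_{D(0,\varepsilon)}\Pi_q\ \ge\ e^{-C_0R^2\log R}\bigl(\Pi_q\car_{D(0,R)}\Pi_q-e^{-R^2/C_0}\Pi_q\car_{D(0,2R)}\Pi_q\bigr),
\]
checked eigenvalue by eigenvalue using the explicit asymptotics of $\nu_{q,k}(R)$ (Lemma~\ref{f1}). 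After conjugating by magnetic translations and summing over sites, the enlarged disks $D(\beta,R)$ cover $\re^2$, so the \emph{function} $\sum_\beta\omega_\beta\car_{D(\beta,R)}$ is now genuinely bounded below by the infimum of local sums, and the trivial bound $\Pi_q f\Pi_q\ge(\inf f)\,\Pi_q$ finishes. The $e^{-R^2/C_0}$ correction, summed over the $O(R^2)$ overlapping $2R$-disks, produces the error term in~\eqref{eq:19}. The key point is that the ``enlargement'' from $\varepsilon$ to $R$ is done at the operator level via simultaneous diagonalization, not via symbols.
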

\noindent The proof of Theorem~\ref{thr:2} relies on Lemma~\ref{le2}
which shows that, at the expense of a small error in energy, we can
``enlarge'' the support of the single site potential
$u$. Lemma~\ref{le2} is stated and proved in
section~\ref{sec:proof-theorem}.\\
Let us now use Theorem~\ref{thr:2} to complete the proof
of~\eqref{eq:18} and, thus, of Theorem~\ref{thr:1}. Pick $L\asymp E
^{-\nu}$, $\nu$ given by Theorem~\ref{f31} and fix $\eta\in(0,1)$
arbitrary. Thus, by Theorem~\ref{f31},~\eqref{eq:10} implies that, for
$E>0$ small,
\begin{multline}
  \label{eq:12}
  {\mathcal N}(2bq + E)- {\mathcal N}(2bq) \\\leq
  CL^d\pro\left(\sigma(\Pi_q V_{L,\omega}^{\rm per}
    \Pi_q)\cap(-\infty,CE]\not=\emptyset\right) + e^{-E^{-\eta}}.
\end{multline}
Using~\eqref{eq:19}, as the random variables
$(\omega_\gamma)_{\gamma\in\Z^2}$ are i.i.d., for $E>0$ small, we
compute
\begin{equation}
  \label{eq:13}
  \begin{split}
    &\pro\left(\sigma(\Pi_q V_{L,\omega}^{\rm per}
      \Pi_q)\cap(-\infty,CE]\not=\emptyset\right)\\
    &\leq \pro\left(\inf_{\gamma\in\Lambda_{2L}\cap\Z^2}\left[
        \sum_{|\beta-\gamma|\leq |\log E|^{(1-\eta)/2}}\omega_\beta
      \right]-e^{-|\log E|^{1-\eta}/C_\eta}
      \leq e^{-|\log E|/2}\right)\\
    &\leq C\,L^d\,\pro\left(\sum_{|\beta|\leq |\log
        E|^{(1-\eta)/2}}\omega_\beta \leq 2e^{-|\log
        E|^{1-\eta}/C_\eta}\right).
  \end{split}
\end{equation}
Recall that, by~\eqref{fin70}, as $\omega_-=0$, one has ${\mathbb
  P}(0\leq\omega_0\leq E)\sim CE^{\kappa}$ and ${\mathbb
  P}(0\geq\omega_0\geq -E)=0$ for $E>0$ small. Hence, by a classical
standard large deviation result (see e.g.~\cite{MR1619036}), we obtain
that
\begin{equation*}
  \pro\left(\sum_{|\beta|\leq |\log
      E|^{(1-\eta)/2}}\omega_\beta \leq 2e^{-|\log
      E|^{1-\eta}/C_\eta}\right)\leq C_\eta e^{-|\log
    E|^{2-2\eta}/C_\eta}.
\end{equation*}
Thus, as $L\asymp E ^{-\nu}$, this,~\eqref{eq:12} and~\eqref{eq:13}
yield, for $E>0$ small,
\begin{equation*}
  {\mathcal N}(2bq + E)-{\mathcal N}(2bq) \leq C_\eta e^{-|\log
    E|^{2-2\eta}/C_\eta}.
\end{equation*}
As this bound holds for any $\eta>0$, we obtain~\eqref{eq:18} and,
thus, complete the proof of Theorem~\ref{thr:1}.\qed
\section{The proof of Theorem~\ref{thr:2}}
\label{sec:proof-theorem}
Recall that, for $q\in\N$, $\Pi_q$ is the orthogonal projection on the
eigenspace of $H_0$ corresponding to $2bq$, the $(q+1)$-st Landau
level of $H_0$. We recall
\begin{Le}[\cite{MR1939760}]
  \label{le1}
  Pick $p>1$ and let $V \in L^p({\mathbb R}^2)$ be radially
  symmetric.\\
  Let $(\mu_{q,k}(V))_{k \in \N}$ be the eigenvalues of the compact
  operator $\Pi_q V \Pi_q$ repeated according to
  multiplicity.\\
  Then, for $k\in \N$, one has
  \begin{equation*}
    \mu_{q,k}(V)=\langle V \varphi_{q,k},\varphi_{q,k} \rangle
  \end{equation*}
  where
  \begin{itemize}
  \item the functions $\varphi_{q,k}$ are given by
    \begin{equation*}
      \varphi_{q,k}(x) : = \sqrt{\frac{q!}{\pi\,k!}}
      \left(\frac{b}{2}\right)^{(k-q+1)/2} (x_1 + ix_2)^{k-q} {\rm
        L}^{(k-q)}_q \left(b|x|^2/2\right)e^{-b|x|^2/4},  
    \end{equation*}
    for $x=(x_1,x_2)\in {\mathbb R}^2$,
  \item ${\rm L}^{(k-q)}_q$ are the generalized Laguerre polynomials
    given by
    \begin{equation*}
      {\rm L}^{(k-q)}_q(\xi) : = \sum_{l={\rm max}\{0,q-k\}}^q
      \binom{k}{q-l} \frac{(-\xi)^l}{l!}, \quad \xi \geq 0, \quad q \in
      \N, \quad k \in \N,
    \end{equation*}
  \item $\langle\cdot,\cdot\rangle$ denotes the scalar product in
    $L^2({\mathbb R}^2)$.
  \end{itemize}
  Finally, for $k \in \N$, a normalized eigenfunctions of $\Pi_q V
  \Pi_q$ corresponding to the eigenvalue $\mu_{q,k}(V)$ is equal to
  $\varphi_{q,k}$. In particular, the eigenfunctions are independent
  of $V$.
\end{Le}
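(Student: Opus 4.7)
The plan is to exploit rotational invariance of both $H_0$ and the radial multiplication operator $V$. In the symmetric gauge used in the introduction, $H_0$ commutes with the generator of rotations $L_3 = -i(x_1\partial_2 - x_2\partial_1)$; since $V$ is radial, it also commutes with $L_3$, and hence so does $\Pi_q V\Pi_q$. The strategy is therefore to produce a joint orthonormal eigenbasis of $H_0$ and $L_3$ in the range of $\Pi_q$: because $L_3$ has simple spectrum on each Landau level, such a basis automatically diagonalizes $\Pi_q V\Pi_q$, and its eigenvalues are read off as the diagonal matrix elements $\langle V\varphi_{q,k},\varphi_{q,k}\rangle$.

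I would construct the joint eigenbasis via the standard two-oscillator structure of the Landau Hamiltonian. Writing $z=x_1+ix_2$, one introduces two commuting pairs of ladder operators: the dynamical pair $a,a^*$, with $H_0=2b\,a^*a$, and the guiding-center pair $c,c^*$, which commutes with $H_0$ and shifts the $L_3$ eigenvalue. Starting from the normalized ground state $\varphi_{0,0}(x)=\sqrt{b/(2\pi)}\,e^{-b|x|^2/4}$, repeated application of $(a^*)^q(c^*)^k$, with the appropriate modification when $k<q$, produces an orthonormal family $\{\varphi_{q,k}\}_{k\in\N}$ that exhausts $\operatorname{Ran}\Pi_q$ and satisfies $L_3\varphi_{q,k}=(k-q)\varphi_{q,k}$. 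A direct computation in polar coordinates, using the Rodrigues-type formula for the generalized Laguerre polynomials, yields exactly the closed form appearing in the statement; the prefactor $\sqrt{q!/(\pi\,k!)}\,(b/2)^{(k-q+1)/2}$ is then verified against the classical orthogonality relation $\int_0^\infty t^{|k-q|}\,L_q^{(k-q)}(t)^2\,e^{-t}\,dt = k!/q!$.

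With this basis in hand, the conclusion is immediate: the commutant of $L_3\big|_{\operatorname{Ran}\Pi_q}$ consists precisely of operators that are diagonal in the basis $\{\varphi_{q,k}\}_{k\in\N}$, so $\mu_{q,k}(V)=\langle V\varphi_{q,k},\varphi_{q,k}\rangle$. The hypothesis $V\in L^p(\rd)$ with $p>1$ enters only through the compactness of $\Pi_q V\Pi_q$, which follows from a standard Hilbert--Schmidt estimate exploiting the Gaussian decay of the integral kernel of $\Pi_q$ away from the diagonal; compactness ensures that the enumeration $(\mu_{q,k}(V))_{k\in\N}$ exhausts the nonzero spectrum. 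The only genuine work is the normalization bookkeeping mentioned above, and it is precisely this routine but delicate computation that is the main (minor) obstacle; the conceptual step---observing that radial symmetry forces diagonality in the angular-momentum basis---is essentially immediate once the correct basis is identified.
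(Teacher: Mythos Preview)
Your argument is correct and follows the standard route: exploit the rotational invariance in the symmetric gauge, build the joint eigenbasis of $H_0$ and $L_3$ on $\operatorname{Ran}\Pi_q$ via the two commuting oscillator pairs, observe that $L_3$ has simple spectrum $k-q$ on each Landau level so that any radial perturbation is automatically diagonal, and read off the eigenvalues as the diagonal matrix elements. The compactness step and the normalization bookkeeping are handled as you indicate.

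There is nothing to compare against, however: the paper does not supply a proof of this lemma. It is quoted verbatim from Raikov--Warzel~\cite{MR1939760} and used as a black box (the only subsequent use is the formula for $\nu_{q,k}(R)$ obtained by passing to polar coordinates in $\langle \car_{D(0,R)}\varphi_{q,k},\varphi_{q,k}\rangle$). Your sketch is essentially the argument one finds in that reference, so there is no divergence in approach to report.
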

\noindent We denote by $D(x,R)$ the disk of radius $R>0$, centered at
$x \in {\mathbb R}^2$.  We set $\nu_{q,k}(R):
=\mu_{q,k}(\car_{D(0,R)})$ where $\car_A$ is the characteristic
function of the set $A$.
\begin{Le}
  \label{f1}
  Fix $q\in\N$. Define $\varrho=\varrho(R):=bR^2/2$ and
  \begin{equation}
    \label{eq:1}
    \nu^0_{q,k}(R)=\frac{e^{-\varrho} \varrho^{-q+1+k}}{q!}
    \frac{(k-\rho)^{2q-1}}{k!}.
  \end{equation}
  Pick $\beta\in(0,2)$. Let $f:[1,+\infty)\to[1,+\infty)$ be such that
  \begin{equation}
    \label{eq:8}
    k^{2q-1} f^{-2q}(k)+k\,f^{-\beta}(k)
    \vers_{k\to+\infty}0.
  \end{equation}
  Then, there exists $k_0\geq1$ and $C>0$ such that, for $k\geq k_0$,
  \begin{equation}
    \label{2}
    \sup_{\substack{R>0\\ \varrho(R)\leq k-f(k)}}
    \left|\frac{\nu_{q,k}(R)}{\nu^0_{q,k}(R)}-
      1\right|\leq C\left(\frac{k^{2q-1}}{f^{2q}(k)}
      +\frac{k}{f^{\beta+1}(k)} \right).
  \end{equation}
\end{Le}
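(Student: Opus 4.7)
The plan is a three-step reduction: first, express $\nu_{q,k}(R)$ as a one-dimensional integral using Lemma~\ref{le1}; second, replace the Laguerre polynomial in the integrand by its leading large-$k$ asymptotics; third, perform a boundary Laplace analysis. By Lemma~\ref{le1}, $\nu_{q,k}(R)=\int_{D(0,R)}|\varphi_{q,k}(x)|^{2}\,dx$. Since $|\varphi_{q,k}|^{2}$ is rotationally symmetric (because $|x_{1}+ix_{2}|^{2(k-q)}=|x|^{2(k-q)}$), the angular integration contributes a factor $2\pi$; the substitution $t=b|x|^{2}/2$ then yields, with $\varrho=\varrho(R)=bR^{2}/2$,
\begin{equation*}
\nu_{q,k}(R)=\frac{q!}{k!}\int_{0}^{\varrho}t^{k-q}\bigl[L_{q}^{(k-q)}(t)\bigr]^{2}e^{-t}\,dt.
\end{equation*}

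For the polynomial step, I would use the explicit formula $L_{q}^{(k-q)}(t)=\frac{1}{q!}\sum_{l=0}^{q}\binom{q}{l}(-t)^{l}(k)_{q-l}$, where $(k)_{m}=k(k-1)\cdots(k-m+1)$, together with the Stirling-type expansion $(k)_{m}=k^{m}-\binom{m}{2}k^{m-1}+\ldots$\,. Collecting terms by powers of $k$ using the combinatorial identity $\binom{q}{l}\binom{q-l}{2}=\binom{q}{2}\binom{q-2}{l}$ (and its higher-order analogues) yields
\begin{equation*}
q!\,L_{q}^{(k-q)}(t)=(k-t)^{q}-\binom{q}{2}k(k-t)^{q-2}+\sum_{m\ge 2}c_{q,m}\,k^{m}(k-t)^{q-2m},
\end{equation*}
a finite sum terminating at $m=\lfloor q/2\rfloor$. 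For $0\le t\le k-f(k)$ each correction term is at most of order $(k/(k-t)^{2})^{m}\le (k/f(k)^{2})^{m}$ relative to the leading $(k-t)^{q}$, and squaring preserves the order of the relative correction.

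For the integral, I would substitute $s=\varrho-t$ and factor:
\begin{equation*}
\int_{0}^{\varrho}t^{k-q}(k-t)^{2q}e^{-t}\,dt=\varrho^{k-q}e^{-\varrho}\int_{0}^{\varrho}(1-s/\varrho)^{k-q}(k-\varrho+s)^{2q}e^{s}\,ds.
\end{equation*}
With $\lambda=(k-q-\varrho)/\varrho$ of order $f(k)/\varrho$, one has $(k-q)\log(1-s/\varrho)+s=-\lambda s-(k-q)s^{2}/(2\varrho^{2})-\ldots$, so the effective integration width is $1/\lambda\sim\varrho/f(k)$. On that scale, replacing $(k-\varrho+s)^{2q}$ by $(k-\varrho)^{2q}$ and dropping the $s^{2}$ and higher Taylor terms each introduce relative errors controllable in terms of $k/f(k)^{2}$; extending the upper limit to $+\infty$ costs only an exponentially small error $e^{-f(k)}$. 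The leading value $\int_{0}^{\infty}(k-\varrho)^{2q}e^{-\lambda s}\,ds=(k-\varrho)^{2q}/\lambda\approx(k-\varrho)^{2q-1}\varrho$ combined with the factor $\varrho^{k-q}e^{-\varrho}$ and the overall prefactor $(q!/k!)\cdot(q!)^{-2}$ recovers exactly $\nu_{q,k}^{0}(R)$.

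The main obstacle is the careful bookkeeping of the error terms so as to obtain the specific form appearing in~\eqref{2}. For the first term $k^{2q-1}/f(k)^{2q}$: the polynomial-approximation error $k/(k-\varrho)^{2}\le k/f(k)^{2}$ is dominated by $k^{2q-1}/f(k)^{2q}$ because $f(k)\le k-\varrho\le k$ automatically, so $k/f^{2}\le k^{2q-1}/f^{2q}$. For the second term $k/f(k)^{\beta+1}$ with $\beta\in(0,2)$: a refined treatment of the Laplace remainder, splitting the Taylor remainder of $(k-q)\log(1-s/\varrho)+s$ at a power $s^{\beta+1}$ and using $\int_{0}^{\infty}s^{\beta+1}e^{-\lambda s}\,ds\sim\lambda^{-(\beta+2)}$ together with the hypothesis $k/f(k)^{\beta}\to 0$, yields an error of size $k/f(k)^{\beta+1}$. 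No new idea is required beyond these calculations, but the combinatorial and asymptotic bookkeeping must be carried out with care.
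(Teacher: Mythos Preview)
Your strategy is essentially the paper's: reduce to the one-dimensional integral, replace the Laguerre polynomial by its leading term $(k-\xi)^{q}/q!$ to obtain $\frac{1}{k!\,q!}\int_0^{\varrho}e^{-\xi}\xi^{k-q}(k-\xi)^{2q}\,d\xi$, and then perform a boundary Laplace analysis at $\xi=\varrho$. The paper bounds the Laguerre remainder slightly differently (via $\binom{k}{q-s}(q-s)!=k^{q-s}(1+O(1/k))$ and a crude $(k+\xi)^{2q}$ majorant), but this leads to the same main integral and the same bound $k^{2q-1}/f^{2q}$.

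There is, however, one point where your write-up is not quite right: the way the parameter $\beta\in(0,2)$ enters. It does \emph{not} come from ``splitting the Taylor remainder at a power $s^{\beta+1}$'' or from estimating $\int_0^\infty s^{\beta+1}e^{-\lambda s}\,ds$; a straight second-order Taylor bound would give the error $k/f(k)^{2}$, which for $\beta>1$ is \emph{weaker} than the claimed $k/f(k)^{\beta+1}$, and for $\beta<1$ is not justified because the quadratic term need not be uniformly small on the whole range $[0,\varrho]$. In the paper the parameter $\beta$ is the cut-off exponent in a splitting of the \emph{integration domain}: in the variable $t=s/\varrho\in[0,1]$ one splits at $t_*=(k-\varrho)^{-\beta/2}$. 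On $[0,t_*]$ the second-order term in $\rho t+(k-q)\log(1-t)$ is $O(kt^2)=O(k(k-\varrho)^{-\beta})$, giving a multiplicative error $k/f^{\beta}$ and hence an additive error $k/f^{\beta+1}$ after multiplying by the main value $1/(k-\varrho)$; on $[t_*,1]$ concavity gives the exponentially small tail $O(e^{-(k-\varrho)^{1-\beta/2}})$, which is where the restriction $\beta<2$ is used. With this correction your argument goes through and coincides with the paper's.
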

\noindent This lemma is an extension of Corollary 2
in~\cite{MR2097581} to a larger range of radii $R$.
\begin{proof}[Proof of Lemma~\ref{f1}]
  By Lemma~\ref{le1}, passing to polar coordinates $(r,\theta)$ in the
  integral $\langle \car_{D(0,R)} \varphi_{q,k}, \varphi_{q,k}
  \rangle$ and changing the variable $br^2/2 = \xi$, the eigenvalues
  $\nu_{q,k}(R)$ of the operator $\Pi_q \car_{D(0,R)} \Pi_q$ are
  written as
  \begin{equation*}
    \nu_{q,k}(R)  =
    \frac{q!}{k!} \int_0^{\varrho} \xi^k \,
    \left[{\rm L}^{(k-q)}_q(\xi)\right]^2 \, e^{-\xi} \, d\xi.
  \end{equation*}
  For $q=0$, we have
  \begin{equation}
    \label{eq:9}
    \nu_{0,k}(R)=
    \frac{1}{k!} \int_0^{\varrho} \xi^k \,
    e^{-\xi} \, d\xi=
    \frac{e^{-\varrho} \varrho^{k+1}}{k!} \int_0^1 e^{\rho t+k\log(1-t)}\,
    dt.
  \end{equation}
  Now, using a Taylor expansion at $0$ and the concavity of $t\mapsto
  \rho t+k\log(1-t)$, write
  \begin{equation*}
    \begin{split}
      \int_0^1 e^{\rho t+k\log(1-t)}\,
      dt&=\int_0^{(k-\rho)^{-\beta/2}}e^{\rho t+k\log(1-t)}dt+
      \int_{(k-\rho)^{-\beta/2}}^1e^{\rho t+k\log(1-t)}dt\\
      &=\int_0^{(k-\rho)^{-\beta/2}}e^{-(k-\rho)t}
      \left(1+O(k(k-\rho)^{-\beta})\right)dt\\& \hskip6cm+
      O\left(e^{-(k-\rho)^{1-\beta/2}}\right)\\&=\frac1{k-\rho}+
      O\left(\frac{k}{(k-\rho)^{\beta+1}}\right)
    \end{split}
  \end{equation*}
  This and~\eqref{eq:9} yields~\eqref{2} when $q=0$.\\
  Consider now the case $q\geq1$. For some $C_q>0$, one has
  \begin{equation}
    \label{eq:4}
    \forall k\geq1,\quad \sup_{s\in\{0,\ldots,q\}}\left|k^{s-q}
      \binom{k}{q-s}(q-s)!-1\right|\leq\frac{C_q}{k}.
  \end{equation}
  In order to check~\eqref{2}, we assume that $k \geq q$. In this
  case, using~\eqref{eq:4}, we compute
  \begin{equation}
    \label{eq:5}
    \begin{split}
      \nu_{q,k}(R) &= \frac{q!}{k!} \sum_{l,m=0}^q (-1)^{l+m}
      \frac{1}{m!l!}\binom{k}{q-l} \binom{k}{q-m} \int_0^{\varrho}
      e^{-\xi} \xi^{k-q+m+l} d\xi\\ &=V(k,q)+R(k,q)
    \end{split}
  \end{equation}
  where
  \begin{gather}
    \label{eq:2}
    \begin{split}
      V(k,q)&=\frac{1}{k!q!} \sum_{l,m=0}^q (-1)^{l+m} \binom{q}{l}
      \binom{q}{m} k^{2q-l-m}\int_0^{\varrho} e^{-\xi} \xi^{k-q+m+l}
      d\xi\\&=\frac{1}{k!q!}\int_0^{\varrho}
      e^{-\xi}\xi^{k-q}\left(k-\xi\right)^{2q} d\xi,
    \end{split}\\\intertext{and}
    \label{eq:3}
    \begin{split}
      |R(k,q)|&\leq \frac{C_q}{k}\frac{1}{k!q!} \sum_{l,m=0}^q
      \binom{q}{l} \binom{q}{m} k^{2q-l-m}\int_0^{\varrho} e^{-\xi}
      \xi^{k-q+m+l} d\xi\\&\leq \frac{C_q}{k}\frac{1}{k!q!}
      \int_0^{\varrho} e^{-\xi} \xi^{k-q}\left(k+\xi\right)^{2q} d\xi.
    \end{split}
  \end{gather}
  For $\rho\leq k-f(k)$, using~\eqref{eq:8}, one computes
  \begin{equation}
    \label{eq:6}
    \left|\frac{|R(k,q)|}{V(k,q)}\right|\leq
    C\frac{k^{2q-1}}{f^{2q}(k)}\vers_{k\to+\infty}0.
  \end{equation}
  On the other hand, as in the case $q=0$, we have
  \begin{equation*}
    \int_0^{\varrho}
    e^{-\xi}\xi^{k-q}\left(k-\xi\right)^{2q} d\xi=
    e^{-\rho}\rho^{k-q+1}(k-\rho)^{2q}I(k,\rho)
  \end{equation*}
  where
  \begin{equation*}
    I(k,\rho)=\int_0^1
    e^{\rho\xi}\,(1-\xi)^{k-q}\left(1+\frac{\rho}{k-\rho}\xi\right)^{2q}
    d\xi.
  \end{equation*}
  The function $\D t\mapsto \rho
  t+(k-q)\log(1-t)+2q\log\left(1+\frac{\rho}{k-\rho}t\right)$ is
  concave on $[0,1]$ and its derivative at $0$ is
  \begin{equation*}
    \rho-k+q+2q\rho/(k-\rho)=(\rho-k)\left(1+O(k(k-\rho)^{-2})\right).
  \end{equation*}
  Hence, as in the case $q=0$, we obtain that
  \begin{equation*}
    I(k,\rho)=\frac1{k-\rho}+ O\left(\frac{k}{(k-\rho)^{\beta+1}}\right).
  \end{equation*}
  Plugging this into~\eqref{eq:2}, using~\eqref{eq:6}
  and~\eqref{eq:3}, and replacing in~\eqref{eq:5}, we obtain~\eqref{2}
  for $q\geq1$.\\
  This completes the proof of Lemma~\ref{f1}.
\end{proof}
\noindent We will now use Lemma~\ref{f1} to derive the ``enlargement
of obstacles'' lemma for the Landau-Anderson model; we prove
\begin{Le}
  \label{le2}
  Let $q\in\N$ and fix $b>0$. Fix $\varepsilon>0$. There exists
  $C_0>0$ and $R_0>1$ such that, for each $R\geq R_0$,
  \begin{equation}
    \label{eq:7}
    \Pi_q \car_{D(0,\varepsilon)} \Pi_q \geq
    e^{-C_0 R^2\log R} \left(\Pi_q \car_{D(0,R)}\Pi_q - e^{-R^2/C_0} \Pi_q
      \car_{D(0,2R)} \Pi_q\right).
  \end{equation}
\end{Le}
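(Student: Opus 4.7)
Since $\car_{D(0,\varepsilon)}$, $\car_{D(0,R)}$ and $\car_{D(0,2R)}$ are all radial, Lemma~\ref{le1} shows that the operators $\Pi_q\car_{D(0,\varepsilon)}\Pi_q$, $\Pi_q\car_{D(0,R)}\Pi_q$ and $\Pi_q\car_{D(0,2R)}\Pi_q$ are simultaneously diagonalised in the orthonormal basis $(\varphi_{q,k})_{k\in\N}$, with respective eigenvalues $\nu_{q,k}(\varepsilon)$, $\nu_{q,k}(R)$ and $\nu_{q,k}(2R)$. The operator inequality~\eqref{eq:7} is therefore equivalent to the family of scalar inequalities
\begin{equation*}
\nu_{q,k}(\varepsilon)\geq e^{-C_0 R^2\log R}\left(\nu_{q,k}(R)-e^{-R^2/C_0}\nu_{q,k}(2R)\right),\quad k\in\N.
\end{equation*}
My plan is to verify these by splitting in two at a threshold $k\asymp R^2$.

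For $k\geq KR^2$ with $K$ a sufficiently large constant, $k$ exceeds $\varrho(2R)=2bR^2$ by a macroscopic margin, so Lemma~\ref{f1} applies to both $\nu_{q,k}(R)$ and $\nu_{q,k}(2R)$ with $o(1)$ relative error. A direct computation using~\eqref{eq:1} gives
\begin{equation*}
\frac{\nu^0_{q,k}(R)}{\nu^0_{q,k}(2R)}=e^{3\varrho(R)}\,4^{q-1-k}\left(\frac{k-\varrho(R)}{k-4\varrho(R)}\right)^{2q-1},
\end{equation*}
with $\varrho(R)=bR^2/2$; the last parenthesis stays bounded for $k\geq KR^2$ with $K>4b$, while the dominant factor is $4^{-k}e^{3bR^2/2}$. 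Choosing first $K>3b/(2\log 4)$ and then $C_0$ large enough, this ratio is at most $e^{-R^2/C_0}$ uniformly in $k\geq KR^2$, so $\nu_{q,k}(R)\leq e^{-R^2/C_0}\nu_{q,k}(2R)$ and the right-hand side of the target inequality is non-positive; the inequality is automatic in this regime.

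For the complementary regime $k\leq KR^2$, I bound the right-hand side crudely by $\nu_{q,k}(R)\leq 1$ and aim for the uniform lower bound $\nu_{q,k}(\varepsilon)\geq e^{-C_0 R^2\log R}$. Starting from the integral representation
\begin{equation*}
\nu_{q,k}(\varepsilon)=\frac{q!}{k!}\int_0^{b\varepsilon^2/2}\xi^k\,\bigl[L_q^{(k-q)}(\xi)\bigr]^2\,e^{-\xi}\,d\xi
\end{equation*}
derived in the proof of Lemma~\ref{f1}, I use that on the bounded interval $[0,b\varepsilon^2/2]$ the leading term $\binom{k}{q}$ of $L_q^{(k-q)}(\xi)$ dominates as $k\to\infty$ (consecutive terms are reduced by a factor of order $q\xi/k$), so $|L_q^{(k-q)}(\xi)|\geq \binom{k}{q}/2$ as soon as $k\geq k_0(\varepsilon,q,b)$. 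Combined with $e^{-\xi}\geq e^{-b\varepsilon^2/2}$, the crude estimate $\binom{k}{q}\geq (k/(2q))^q$ for large $k$, and Stirling's formula for $k!$, this produces $\nu_{q,k}(\varepsilon)\geq e^{-ck\log k}$ for a constant $c$ depending only on $q,\varepsilon,b$. Since $k\leq KR^2$ forces $k\log k\leq 3KR^2\log R$ for $R$ large, picking $C_0\geq 3cK$ closes the argument; the finitely many indices $k<k_0$ are absorbed using the strict positivity of $\nu_{q,k}(\varepsilon)$ against the smallness of $e^{-C_0R^2\log R}$ for $R$ large.

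The main obstacle is the small-$k$ bound: $\nu_{q,k}(\varepsilon)$ decays super-exponentially in $k$ at rate $k\log k$, while the target only allows $R^2\log R$ decay. The matching works precisely because the cutoff $k\leq KR^2$ forces $k\log k\lesssim R^2\log R$. Constants must be calibrated in sequence: $K$ large enough to validate the large-$k$ regime, then $C_0$ large enough to absorb both the regime-II threshold computation and the $k\log k$ rate of the regime-I lower bound.
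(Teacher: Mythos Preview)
Your proof is correct and follows essentially the same route as the paper: reduce to the scalar inequalities via Lemma~\ref{le1}, split at a threshold $k_0\asymp R^2$, use Lemma~\ref{f1} for $k\geq k_0$ to make the right-hand side non-positive, and for $k\leq k_0$ lower-bound $\nu_{q,k}(\varepsilon)$ by $e^{-ck\log k}$ against the trivial bound $\nu_{q,k}(R)\leq 1$. The only cosmetic difference is that the paper obtains the small-$k$ lower bound on $\nu_{q,k}(\varepsilon)$ by invoking the asymptotics~\eqref{eq:1}--\eqref{2} again, whereas you extract it directly from the Laguerre integral; both give the same $e^{-C_0R^2\log R}$ prefactor.
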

\noindent This lemma is basically Lemma 2 in~\cite{MR2097581} except
that we want to control the behavior of the constants coming up in the
inequality in terms of $R$.
\begin{proof}[Proof of Lemma~\ref{le2}]
  We fix $\delta \in (0,1)$. Recall Lemma~\ref{f1}, in
  particular~\eqref{2} and~\eqref{eq:1}. Pick $C>2b$ and set
  $k_0=k_0(R):=C R^2$. Let $f$ satisfy~\eqref{eq:8}. Hence, there
  exists $R_0>0$ such that, for $R\geq R_0$ and $k\geq k_0=k_0(R)$,
  one has $k-f(k)\geq \rho(R)$. Thus, Lemma~\ref{f1} implies that, for
  $\tilde R\in[R/2,2R]$, one has
  \begin{equation}
    \label{eq:11}
    \begin{split}
      (1-\delta) \frac{e^{-\varrho(\tilde R)} \varrho(\tilde
        R)^{k-q+1}}{q!}&
      \frac{(k-\rho(\tilde R))^{2q-1}}{k!} \leq  \nu_{q,k}(\tilde R)\\
      &\leq (1+\delta) \frac{e^{-\varrho(\tilde R)} \varrho(\tilde
        R)^{k-q+1}}{q!}  \frac{(k-\rho(\tilde R))^{2q-1}}{k!}.
    \end{split}
  \end{equation}
  We show that, if $R\geq R_0$, then, the operator inequality
  \begin{equation}
    \label{7}
    \Pi_q \car_{D(0,\varepsilon)} \Pi_q \geq
    C_1 \left(\Pi_q \car_{D(0,R)} \Pi_q - C_2  \Pi_q
      \car_{D(0,2 R)} \Pi_q\right)
  \end{equation}
  holds with the following constants:
  \begin{itemize}
  \item
    \begin{equation}
      \label{6c}
      C_1:= \min_{k \in \{0,\ldots,k_0\}}
      \frac{\nu_{q,k}(\varepsilon)}{\nu_{q,k}(R)}\geq \frac1{C_0}
      e^{-2CR^2\log R}\,;
    \end{equation}
    the lower bound holds for sufficiently large $R$ and, as $k_0=C
    R^2$, is a consequence of~\eqref{eq:1} and~\eqref{2} written for
    $\nu_{q,k}(\varepsilon)$;
  \item
    \begin{equation}
      \label{6d}
      C_{2,q} : = \frac{1+\delta}{1-\delta}
      \left(\frac{C}{C-2b}\right)^{2q-1}
      2^{-2(k_0 - q + 1)} e^{-\varrho(R) + \varrho(2 R)}\leq
      e^{-R^2/C_0};
    \end{equation}
    the upper bound holds for sufficiently large $R$ and follows from
    $k_0\geq \rho(2R)$.
  \end{itemize}
  By Lemma \ref{le1}, the operators
  $\Pi_q\car_{D(0,\varepsilon)}\Pi_q$, $\Pi_q\car_{D(0,R)}\Pi_q$, and
  $\Pi_q\car_{D(0,2R)}\Pi_q$, are reducible in the same basis
  $\{\varphi_{q,k}\}_{k\in\N}$.  Hence, in order to prove \eqref{7},
  it suffices to check that, for each $k\in\N$, the following
  numerical inequality holds
  \begin{equation}
    \label{8}
    \nu_{q,k}(\varepsilon)
    \geq C_1 \left(\nu_{q,k}(R) - C_{2,q} \;\nu_{q,k}(2 R) \right).
  \end{equation}
  If $k \leq k_0$, then~\eqref{8} holds as $\nu_{q,k}(\varepsilon)
  \geq C_1 \nu_{q,k}(R)$ by \eqref{6c}. As $k_0\leq C R^2$ for $C>2b$
  and $\rho=bR^2/2$, or $k\geq k_0$, one has $C(k-\rho(2R))\geq
  (C-2b)(k-\rho(R))$.\\
  Thus, by~\eqref{eq:11} and~\eqref{6d}, we have
  \begin{equation*}
    \begin{split}
      \nu_{q,k}(R)& - C_{2,q}\nu_{q,k}(2 R) \leq (1+\delta)
      \frac{e^{-\varrho(R)} \varrho(R)^{-q+1}}{q!}
      \frac{(k-\rho(R))^{2q-1}\varrho(R)^k}{k!}  \\
      & - \left( \frac{1+\delta}{1-\delta} 2^{-2(k_0 - q+1)}
        e^{-\varrho(R) + \varrho(2
          R)}\left(\frac{C(k-\rho(2R))}{(C-2b)(k-\rho(R))}\right)^{2q-1}
      \right.\\
      &\hskip2cm\left. \times (1-\delta) \frac{e^{-\varrho(2 R)} \varrho(2
          R)^{-q+1}}{q!} \frac{(k-\rho(R))^{2q-1}\varrho(2 R)^k}{k!}
      \right) \\
      & = (1+\delta) \frac{e^{-\varrho(R)}} {q!}
      \frac{(k-\rho(R))^{2q-1} \varrho(2 R)^{k-q+1}}{k!}  2^{2(q-1)}
      \left( 2^{-2k} - 2^{-2k_0}\right).
    \end{split}
  \end{equation*}
  Hence, we find that $\nu_{q,k}(R) - C_2 \nu_{q,k}(2 R) \leq 0$ if
  $k\geq k_0$, which again implies \eqref{8}.  This completes the
  proof of Lemma~\ref{le2}.
\end{proof}
\noindent We now prove Theorem~\ref{thr:2}.\\
The magnetic translations for the constant magnetic field problem in
two-dimensions are defined as follows (see e.g.~\cite{Sj:91}). For any
field strength $b \in \R$, any vector $\alpha \in \R^2$, the magnetic
translation by $\alpha$, say, $U_\alpha^b$ is defined as
\begin{equation*}
  U_\alpha^b f(x):=e^{\frac{ib}{2} ( x_1 \alpha_2 - x_2 \alpha_1 )}
  f(x + \alpha)\quad f\in C_0^\infty ( \R^2).
\end{equation*}
The invariance of $H_0$ with respect to the group of magnetic
translations $(U_\alpha^b)_{\alpha\in{\mathbb Z}^2}$ implies that,
for $\gamma\in \Z^2$, one has
\begin{equation}
  \label{eq:14}
  U_\gamma^b \Pi_q \car_{D(0,\varepsilon)} \Pi_q U_{-\gamma}^b =
  \Pi_q \car_{D(\gamma,\varepsilon)} \Pi_q .
\end{equation}
Hypothesis ${\bf H}_1$ on the single-site potential $u$ guarantees
that there exists $\epsilon\in(0,1/2)$ so that $\D V_\omega\geq
\sum_{\gamma\in\Z^2}\omega_\gamma\car_{D(\gamma,\varepsilon)}$. Plugging
this into~\eqref{eq:20}, we get
\begin{equation}
  \label{eq:15}
  V_{L,\omega}^{\rm per}\geq \sum_{\gamma \in 2L{\mathbb
      Z}^2}\sum_{\beta\in\Lambda_{2L}\cap\Z^2}\omega_\beta
  \car_{D(\gamma+\beta,\varepsilon)}.
\end{equation}
Fix $\eta\in(0,1)$ and pick $R\asymp|\log E|^{(1-\eta)/2}$.
Lemma~\ref{le2} and~\eqref{eq:14} imply that
\begin{equation*}
  \begin{split}
    \Pi_q \car_{D(\gamma,\varepsilon)} \Pi_q \geq e^{-C_0 R^2\log R}
    \left(\Pi_q \car_{D(\gamma,R)}\Pi_q - e^{-R^2/C_0} \Pi_q
      \car_{D(\gamma,2R)} \Pi_q\right).
  \end{split}
\end{equation*}
Hence, as the random variables $(\omega_\gamma)_{\gamma\in\Z^2}$ are
bounded, this and~\eqref{eq:15} imply that
\begin{equation*}
  \begin{split}
    e^{C_0 R^2\log R}&\Pi_q V_{L,\omega}^{\rm per}\Pi_q \geq e^{C_0
      R^2\log R}\sum_{\gamma \in 2L{\mathbb
        Z}^2}\sum_{\beta\in\Lambda_{2L}\cap\Z^2}\omega_\beta\Pi_q
    \car_{D(\gamma+\beta,\varepsilon)}\Pi_q\\
    &\geq \sum_{\gamma \in 2L{\mathbb
        Z}^2}\sum_{\beta\in\Lambda_{2L}\cap\Z^2} \omega_\beta\Pi_q
    \car_{D(\gamma+\beta,R)}\Pi_q\\&\hskip2cm - Ce^{-R^2/C_0}
    \sum_{\gamma \in 2L{\mathbb
        Z}^2}\sum_{\beta\in\Lambda_{2L}\cap\Z^2}
    \Pi_q \car_{D(\gamma+\beta,2R)} \Pi_q\\
    &\geq \Pi_q\sum_{\gamma \in 2L{\mathbb
        Z}^2}\sum_{\beta\in\Lambda_{2L}\cap\Z^2} \omega_\beta
    \sum_{|\nu-\gamma-\beta|\leq R/2}\car_{|x-\nu|\leq1/2}\Pi_q-
    Ce^{-R^2/C_0} R^2\Pi_q\\
    &\geq    \left(\inf_{\gamma\in\Lambda_{2L}\cap\Z^2}
      \left(\sum_{|\beta-\gamma|\leq
          R/2}\omega_\beta \right)- CR^2e^{-R^2/C_0}\right)\Pi_q
  \end{split}
\end{equation*}
Taking into account $R\asymp|\log E|^{(1-\eta)/2}$, this completes the
proof of Theorem~\ref{thr:2}.
\def\cprime{$'$} \def\cydot{\leavevmode\raise.4ex\hbox{.}}
\def\cprime{$'$}


\begin{thebibliography}{1}

\bibitem{MR2097581} Jean-Michel Combes, Peter~D. Hislop, Fr{\'e}d{\'e}ric
  Klopp, and Georgi Raikov.  \newblock Global continuity of the
  integrated density of states for random {L}andau {H}amiltonians.
  \newblock {\em Comm. Partial Differential Equations},
  29(7-8):1187--1213, 2004.

\bibitem{MR1619036} Amir Dembo and Ofer Zeitouni.  \newblock {\em
    Large deviations techniques and applications}, volume~38 of {\em
    Applications of Mathematics (New York)}.  \newblock
  Springer-Verlag, New York, second edition, 1998.

\bibitem{MR82c:81181} B.~A. Dubrovin and S.~P. Novikov.  \newblock
  Fundamental states in a periodic field. {M}agnetic {B}loch functions
  and vector bundles.  \newblock {\em Dokl. Akad. Nauk SSSR},
  253(6):1293--1297, 1980.

\bibitem{MR2307751} Werner Kirsch and Bernd Metzger.  \newblock The
  integrated density of states for random {S}chr{\"o}dinger operators.
  \newblock In {\em Spectral theory and mathematical physics: a
    {F}estschrift in honor of {B}arry {S}imon's 60th birthday},
  volume~76 of {\em Proc. Sympos.  Pure Math.}, pages
  649--696. Amer. Math. Soc., Providence, RI, 2007.

\bibitem{MR2249786} Fr{\'e}d{\'e}ric Klopp and Georgi Raikov.  \newblock
  Lifshitz tails in constant magnetic fields.  \newblock {\em
    Comm. Math. Phys.}, 267(3):669--701, 2006.

\bibitem{MR94h:47068} Leonid Pastur and Alexander Figotin.  \newblock
  {\em Spectra of random and almost-periodic operators}, volume 297 of
  {\em Grundlehren der Mathematischen Wissenschaften [Fundamental
    Principles of Mathematical Sciences]}.  \newblock Springer-Verlag,
  Berlin, 1992.

\bibitem{MR1939760} Georgi~D. Raikov and Simone Warzel.  \newblock
  Quasi-classical versus non-classical spectral asymptotics for
  magnetic {S}chr{\"o}dinger operators with decreasing electric
  potentials.  \newblock {\em Rev. Math. Phys.}, 14(10):1051--1072,
  2002.

\bibitem{Sj:91} Johannes~Sj{\"o}strand.  \newblock Microlocal analysis for
  periodic magnetic {Schr{\"o}dinger} equation and related questions.
  \newblock In {\em Microlocal analysis and applications}, volume 1495
  of {\em Lecture Notes in Mathematics}, Berlin, 1991. Springer
  Verlag.

\bibitem{MR2378428} Ivan Veseli{\'c}.  \newblock {\em Existence and
    regularity properties of the integrated density of states of
    random {S}chr{\"o}dinger operators}, volume 1917 of {\em Lecture Notes
    in Mathematics}.  \newblock Springer-Verlag, Berlin, 2008.

\end{thebibliography}
\end{document}